\newtheorem{prethm}{{\bf Theorem}}
\newtheorem{prepro}[prethm]{Proposition}
\newenvironment{pro}{\begin{prepro}{\hspace{-0.5
               em}{\bf.}}}{\end{prepro}}
\newtheorem{prelem}[prethm]{Lemma}
\newenvironment{lem}{\begin{prelem}{\hspace{-0.5
               em}{\bf.}}}{\end{prelem}}
\newtheorem{precor}[prethm]{Corollary}
\newtheorem{prerem}[prethm]{{\bf Remark}}
\newenvironment{rem}{\begin{prerem}\em{\hspace{-0.5
              em}{\bf.}}}{\end{prerem}}
\newtheorem{preconj}[prethm]{{\bf Conjecture}}
\newtheorem{preexample}{{\bf Example}}
\newtheorem{preproof}{{\bf Proof.}}
\newenvironment{proof}[1]{\begin{preproof}{\rm
               #1}\hfill{$\Box$}}{\end{preproof}}
\newcommand{\noi}{\noindent}
\newcommand{\st}{{\rm ST}}
\newcommand{\sts}{{\rm STS}}
\newcommand{\be}{\beta}
\newcommand{\al}{\alpha}
\newcommand{\la}{\lambda}
\newcommand{\T}{{\rm T}}
\newcommand{\si}{\sigma}
\newcommand{\C}{{\cal C}}
\newcommand{\B}{{\cal B}}
\newcommand{\F}{{\cal F}}
\newcommand{\x}{{\bf x}}
\newcommand{\y}{{\bf y}}
\newcommand{\bmi}[1]{\mbox{\boldmath $ #1$}}
\title{Simple signed Steiner triple systems}
\author{ E. Ghorbani$^{\textrm{a,b}}$ ~~~ G.B. Khosrovshahi$^{\textrm{b,c,}}$\thanks{Corresponding author.}\\
{\small {$^{\rm a}$Department of Mathematics, K.N. Toosi University of Technology,}}\\
{\small P. O. Box 16315-1618, Tehran, Iran}\\
 {\small {$^{\rm b}$School of Mathematics, Institute
for Research in Fundamental Sciences (IPM),}} \\
 {\small { P.O. Box 19395-5746, Tehran, Iran}}\\
 {\small { $^{\rm c}$Department of Mathematics, Statistics and Computer Science, University of Tehran, Iran}}\\
 {\tt\small e\_ghorbani@ipm.ir~~~rezagbk@ipm.ir}}
\begin{document}
\maketitle

\begin{abstract}

Let $X$ be a  $v$-set,  $\B$  a set of 3-subsets (triples) of $X$, and $\B^+\cup\B^-$ a partition of $\B$ with  $|\B^-|=s$.
   The pair $(X,\B)$ is called a simple signed Steiner triple system, denoted by ST$(v,s)$, if the number of occurrences of every 2-subset of $X$ in triples $B\in\B^+$ is one more than the  number of occurrences in  triples $B\in\B^-$.
In this paper we prove that $\st(v,s)$ exists if and only if $v\equiv1,3\pmod6$, $v\ne7$,   and $s\in\{0,1,\ldots,s_v-6,s_v-4,s_v\}$, where $s_v=v(v-1)(v-3)/12$ and for $v=7$, $s\in\{0,2,3,5,6,8,14\}$.

\vspace{3mm}
\noindent {\em AMS subject Classification}:  05B07\\
\noindent{\em Keywords}: Simple Signed Steiner Triple System, Trade
\end{abstract}

\section{Introduction}

Let $X$ be a finite set and let ${X\choose k}$ denote the set of all $k$-subsets of $X$.
If $\B$ is a subset of the power set of $X$, then for any $\al\subseteq X$, we define
$m(\al,\B)$ to be the number of $B\in\B$ such that $\al\subseteq B$.
A {\em signed set} is a set with an assignment of $+$ or $-$ to its elements.
We say that a subset $\al$ is $t$-{\em balanced} in a signed set $\B$ if $m(\al,\B^+)-m(\al,\B^-)=t$.
Let $Y$ be a signed set.  We denote the set of positive and negative elements of $Y$ by $Y^+$ and $Y^-$, respectively.
We denote the 2-subset $\{x,y\}$  by $xy$.
A {\em simple signed Steiner triple system} is a pair $(X,\B)$, where $\B\subseteq{X\choose3}$ is a signed set such that every pair $xy$ of $X$ is 1-balanced in $\B$.
 If $|X|=v$ and $|\B^-|=s$, we denote the simple signed Steiner triple system $(X,\B)$ by $\st(v,s)$.
Clearly, an  $\st(v,0)$ is a Steiner triple system, $\sts(v)$.

A $t$-$(v,k,\lambda)$ {\em design} is a pair $(X,\B)$  where $X$ is a $v$-set and  $\B$ is a collection of $k$-subsets of $X$ such that
every $t$-subset of $X$ occurs exactly $\la$ times in blocks $B\in\B$.
If $\B$ has no repeated blocks, then the design is called {\em simple}. The conditions
\begin{equation}\label{nec}
   \la_i={v-i\choose t-i}/{k-i\choose t-i}\in\mathbb{Z},~~i=0,1,\ldots,t,
\end{equation}
are necessary for the existence of a $t$-$(v,k,\lambda)$ design.
 The {\em inclusion matrix} $W_{tk}^v$  is a $(0,1)$-matrix whose
rows and columns are indexed  by $t$-subsets and $k$-subsets of $X$, respectively, and $W_{tk}^v(T,K)=1$ if and only if
$T\subseteq K$.
 In terms of inclusion matrices,  $t$-designs can be described in a linear-algebraic language.
 Let $\x$ be the characteristic vector of length ${v\choose k}$ for the block set of a $t$-$(v,k,\lambda)$ design, then $\x$ is a solution for
  \begin{equation}\label{w}
   W_{tk}^v{\bf x}=\lambda{\bf1},
 \end{equation}
 where ${\bf1}$ is the all-1 vector.
 This is a motivation to generalize the concept of $t$-designs to signed $t$-designs: every integral solution of (\ref{w}) is called a {\em signed $t$-$(v,k,\lambda)$  design}.
 We remark that the conditions (\ref{nec}) are necessary and sufficient for the existence of an integral solution for (\ref{w}). This is proved in \cite{gju,w}, see also \cite{gkmm}.

The aim of this paper is to prove the following theorem.

\noi{\bf Theorem.} {\em For $v\ne7$, there exists an $\st(v,s)$ if and only if $v\equiv1,3\pmod6$ and $s\in\{0,1,\ldots,s_v-6,s_v-4,s_v\}$, where $s_v=v(v-1)(v-3)/12$. Moreover, for $v=7$, an $\st(7,s)$ exists if and only if
$s\in\{0,2,3,5,6,8,14\}$.}

\noi Alternatively, in the language of inclusion matrices, the theorem describes the $(-1,0,1)$-solutions $\x$ of $W_{2,3}^v{\bf x}={\bf1}$.
We  remark that the $(0,1)$-solutions  of $W_{2,3}^v{\bf x}={\bf1}$ are Steiner triple systems.

In an attempt to construct random $\sts(v)$, Cameron \cite{c} makes use of $\st(v,1)$ and calls these objects `improper STS'. Our objective in this paper may also be interpreted as a determination of objects with much more `improperness'.

\section{Preliminaries}

Every integral solution of (\ref{w}) with $\la=0$ is  a $\T(t,k,v)$ trade.
Combinatorially, a $\T(t, k, v)$ trade $T$ on $X$ is a nonempty signed collection of blocks from ${X\choose k}$
such that the number of times that each element of ${X\choose t}$ occurs in $T^+$ is the
same as the number of times it occurs in $T^-$. In other words every $t$-subset of $X$ is 0-balanced in $T$.
The value $s=|T^+|=|T^-|$ is called the {\em volume} of the trade $T$. For more on trades see \cite{hk}.
If $X_1$ and $X_2$ are two disjoint sets,
 $\B_1\subseteq{X_1\choose k_1}$ and  $\B_2\subseteq{X_2\choose k_2}$, then we define
 $$\B_1\cdot\B_2=\{B_1\cup B_2:B_1\in\B_1, B_2\in\B_2\}.$$
By a {\em 1-factor} ({\em 1-factorization}) of ${X\choose2}$ we mean a 1-factor (1-factorization) of the complete graph with vertex set $X$ and edge set ${X\choose2}$.

\subsection{Spectrum of simple trades}

We make use of the following result of \cite{bkt}.
\begin{lem} \label{bkn} {\rm(i)} If $v\equiv1\pmod4$ and $v\ge4$, then there exists a simple $\T(1,2,v)$ trade  of volume $s$ if and only if $s\in\{2,3,\ldots,\frac{1}{2}{v\choose2}-2,\frac{1}{2}{v\choose2}\}$.\\
  {\rm(ii)} If $v\equiv2\pmod4$, then there exists a simple $\T(2,3,v)$ trade of volume $s$ if and only if $s\in\{4,6,7,\ldots,\frac{1}{2}{v\choose3}-6,\frac{1}{2}{v\choose3}-4,\frac{1}{2}{v\choose3}\}.$ 
\end{lem}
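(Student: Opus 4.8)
The plan is to treat both parts as \emph{spectrum} problems and to exploit one unifying device: complementation inside a \emph{maximum} trade. The forbidden volumes will then be dictated by the forbidden volumes of small trades, together with a short parity obstruction, and the achievable set will turn out to be (small achievable volumes) $\cup$ ($M$ minus small achievable volumes), where $M$ is the maximum volume.

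For part (i) I would first recast the objects graph-theoretically: a simple $\T(1,2,v)$ trade is a pair of edge-disjoint graphs $G^+,G^-$ on $X$ with identical degree sequences, the volume being $|E(G^+)|$, and its support $H=G^+\cup G^-$ is an \emph{even} graph, since $\deg_{G^+}(x)=\deg_{G^-}(x)$ forces every degree in $H$ to be even. The maximum volume is $M=\tfrac12\binom{v}{2}$, attained by taking $H=K_v$ (legitimate because $v$ is odd, so each degree $v-1$ is even) and splitting it into two $\tfrac{v-1}{2}$-regular halves; such a split exists because $\tfrac{v-1}{2}$ is even for $v\equiv1\pmod4$, so a $\tfrac{v-1}{2}$-regular graph on $v$ vertices (e.g.\ a circulant) exists and its complement is again $\tfrac{v-1}{2}$-regular. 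The gap at $M-1$ is then immediate: a volume-$(M-1)$ trade omits exactly two edges of $K_v$, and since both $K_v$ and $H$ are even the omitted graph must also be even; but any graph with exactly two edges has a vertex of odd degree, a contradiction. The remaining volumes $2\le s\le M-2$ I would realize by two complementary families: small volumes come from alternately signed even cycles ($C_{2k}$ gives volume $k$) and their edge-disjoint unions, while large volumes are obtained by deleting from a maximum trade the support of such a small sub-trade, choosing the maximum trade so that it contains a prescribed balanced even cycle sign-consistently. This exhibits the achievable set as $\{2,3,4,\dots\}\cup\{M-2,M-3,\dots\}$, i.e.\ all of $[2,M]$ except $M-1$.

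For part (ii) the analogous skeleton is used but the arithmetic is richer. The lower spectrum $\{4,6,7,8,\dots\}$, and in particular the non-existence of $\T(2,3,v)$ trades of volumes $1,2,3,5$, are the classical facts about minimal trades and their foundations; the small-volume non-existence I would either cite or reprove via the observation that in any trade each pair must occur an \emph{even} number of times across the support. The maximum volume $M=\tfrac12\binom{v}{3}$ is an integer because $v\equiv2\pmod4$ makes $\binom v3$ even, and a maximum trade is a $\{\pm1\}$-solution of $W_{2,3}^v\x=\mathbf{0}$: a two-colouring of $\binom{X}{3}$ in which every pair (each lying in $v-2$, an even number of, triples) meets equally many triples of each colour. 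Existence of such a self-complementary colouring for $v\equiv2\pmod4$ is the input I would supply by an explicit construction. Complementation then mirrors the bottom to the top: deleting from a fixed maximum trade $\x_{\max}$ any sign-consistent sub-trade of volume $s$ produces a trade of volume $M-s$, so the achievable small volumes $0,4,6,7,8,\dots$ yield the achievable large volumes $M,\,M-4,\,M-6,\,M-7,\dots$, these meeting the lower run at $M-6$. Conversely the forbidden small volumes $1,2,3,5$ reappear as the forbidden top volumes $M-1,M-2,M-3,M-5$, which I would rule out directly by the pair-parity count: a volume-$(M-j)$ trade omits $2j$ triples, each pair must lie in an even number of them, and for $j\in\{1,2,3,5\}$ this together with balanceability forces an impossible small trade.

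The genuinely delicate points — and where I expect the main work to lie — are twofold. First, one must produce a maximum trade and verify that every small trade needed for the deletion construction embeds into it sign-consistently; second, at the top one must justify the converse direction, namely that the omitted triple-set of a near-maximal trade is not merely \emph{pair-even} but actually supports a trade, since pair-parity is necessary but not obviously sufficient for a set of triples to be balanceable. Pinning down the exact list of forbidden small volumes $\{1,2,3,5\}$, hence the precise top gaps at $M-1,M-2,M-3,M-5$ and the boundary value $M-6$, is the part of the argument that must be carried out with the most care.
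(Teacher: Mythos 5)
First, a point of comparison: the paper contains no proof of this lemma at all --- it is quoted as a known result of Behbahani, Khosrovshahi, and Tayfeh-Rezaie \cite{bkt} --- so your attempt must be judged against that cited theorem rather than against any internal argument. On part (i) your outline is essentially sound: the observation that the support of a simple $\T(1,2,v)$ trade is an even graph, the parity argument excluding volume $\frac{1}{2}\binom{v}{2}-1$ (a two-edge complement cannot be even), and the realization of the remaining volumes are all workable, although the cleaner route for the midrange is to note that any connected even graph with an even number of edges carries a trade (alternate signs along an Euler circuit) and then to exhibit, for each admissible $2s$, an even subgraph of $K_v$ with $2s$ edges and even edge count in each component; your version, which asks a maximum trade to contain prescribed sign-consistent cycles, needs a verification you do not supply.

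Part (ii) is where the genuine gaps lie, and they are exactly at the points you flag but do not fill. (a) Your top-gap argument fails as stated for $j=2$: the four triples of a $4$-subset $\{a,b,c,d\}$ form a pair-even set (every pair is covered $0$ or $2$ times), so pair-parity does \emph{not} force ``an impossible small trade'' when $2j=4$ triples are omitted; moreover the omitted set is unsigned and need not be a trade at all, so ruling out volumes $M-2$, $M-3$, $M-5$ (where $M=\frac{1}{2}\binom{v}{3}$) requires showing that the complement of each admissible pair-even configuration admits no balancing $\pm1$-signing --- a structural argument which is the heart of \cite{bkt} and which your proposal defers entirely. (b) The existence of a maximum trade, i.e.\ a $\pm1$-signing of all of ${X\choose3}$ balancing every pair when $v\equiv2\pmod4$, is asserted as ``input to be supplied''; it is a nontrivial construction, not a routine check. (c) Most importantly, complementation alone cannot produce the middle of the spectrum: deleting a sign-consistent sub-trade of volume $s$ from a maximum trade gives $M-s$, so to reach volumes near $M/2$ you would need sub-trades of volume about $M/2$ inside the maximum trade --- which is the original spectrum problem over again. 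The actual proofs (compare the paper's own Lemma~\ref{remain}, which handles the analogous problem for $v\equiv0\pmod4$) close the midrange by induction on $v$ together with an interval-addition device such as Lemma~\ref{seq}, decomposing ${X\choose3}$ into pieces each covered by trades of flexible volume; some mechanism of this kind is indispensable and is absent from your plan. As it stands, your proposal is a correct identification of the parity obstructions plus a programme, not a proof.
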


\begin{rem} If $v\equiv3\pmod4$, then the maximum volume of a $\T(1,2,v)$ trade on a $v$-set $X$ is $\frac{1}{2}{v\choose2}-\frac{3}{2}$ (\cite{bkt}). If $T$ is such a trade on $X$, then it is easily seen that ${X\choose2}\setminus T=\{x_1x_2,x_1x_3,x_2x_3\}$ for some $x_1,x_2,x_3\in X$.
\end{rem}
\begin{lem}\label{seq}  Let $A=\{a_1,\ldots,a_n\}$ and $B=\{b_1, b_1 + 1,\ldots , b_2\}$, where $a_i$, $b_1,b_2$ are integers such that $a_i>a_{i-1}$ and $b_2-b_1\ge a_i -a_{i-1}-1$
for $2\le i\le n$. Then $\{a+b\mid a\in A,b\in B\}$ contains all integers $a_1 + b_1\le c\le a_n + b_2.$
\end{lem}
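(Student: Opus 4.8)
The plan is to view the sumset $\{a+b\mid a\in A,\ b\in B\}$ as a union of $n$ translated copies of the interval $B$, and to show that these translates chain together to tile the whole range $[a_1+b_1,\ a_n+b_2]$ without leaving any gap. First I would observe that, since $B=\{b_1,b_1+1,\ldots,b_2\}$ is a set of consecutive integers, each translate $a_i+B=\{a_i+b_1,\ a_i+b_1+1,\ldots,\ a_i+b_2\}$ is again a block of consecutive integers, namely the full integer interval from $a_i+b_1$ to $a_i+b_2$. Hence the target sumset is exactly the union $\bigcup_{i=1}^{n}\{a_i+b_1,\ldots,a_i+b_2\}$ of these $n$ blocks, and it clearly lies inside $[a_1+b_1,\ a_n+b_2]$ because $A$ is increasing.

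The key step is to verify that consecutive blocks meet or overlap, so that no integer between them is skipped. The $i$-th block ends at $a_i+b_2$ and the $(i+1)$-th block begins at $a_{i+1}+b_1$; these abut or overlap precisely when $a_{i+1}+b_1\le(a_i+b_2)+1$, which rearranges to $a_{i+1}-a_i-1\le b_2-b_1$. This is exactly the hypothesis $b_2-b_1\ge a_i-a_{i-1}-1$ after a shift of the index. Because $A$ is strictly increasing, the blocks are listed with strictly increasing left endpoints, so once each consecutive pair connects, a short induction on $i$ shows that the union of the first $i$ blocks is the complete interval $[a_1+b_1,\ a_i+b_2]$. Taking $i=n$ then yields that the sumset contains every integer $c$ with $a_1+b_1\le c\le a_n+b_2$, which is the assertion.

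I do not expect any genuine obstacle here; the argument is a routine sumset-covering observation. The only points requiring care are the index bookkeeping between the hypothesis, which is stated for $a_i-a_{i-1}$ with $2\le i\le n$, and the overlap condition, which is most naturally phrased for the pair $(a_i,a_{i+1})$, together with the trivial check that $a_1+b_1$ and $a_n+b_2$ really are the smallest and largest attainable sums, so that nothing outside the stated range needs to be produced.
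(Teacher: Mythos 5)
Your proof is correct; note that the paper itself states Lemma~\ref{seq} with no proof at all, treating it as a routine observation. Your argument --- writing the sumset as the union of the integer intervals $a_i+B=\{a_i+b_1,\ldots,a_i+b_2\}$ and checking that consecutive blocks abut or overlap because $a_{i+1}+b_1\le a_i+b_2+1$ is equivalent to the hypothesis $b_2-b_1\ge a_{i+1}-a_i-1$ (the stated condition with index shifted) --- is exactly the standard argument the authors evidently had in mind, and your handling of the index bookkeeping and the endpoint check is sound.
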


\begin{lem}\label{remain}
 Let  $v=4k\ge8$ and let  $X_1,\ldots,X_k$ be a partition of a $v$-set $X$ into $4$-subsets.
Then for every  $s\in I_v=\{4,6,7,\ldots,t_v-6,t_v-4,t_v\}$, where $t_v=\frac{1}{2}{v\choose3}-\frac{v}{2}=v(v+1)(v-4)/12$, there exists a $\T(2,3,v)$ trade of volume $s$ such that $T\subseteq{X\choose3}\setminus\bigcup_{i=1}^k{X_i\choose3}$.
\end{lem}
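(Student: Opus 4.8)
The plan is to build a trade of each required volume as a sign-respecting union of \emph{elementary} trades on pairwise triple-disjoint supports (so that volumes add), and then to consolidate the resulting set of volumes into a full interval by means of Lemma~\ref{seq}. The entire difficulty created by the excluded triples $\bigcup_i{X_i\choose3}$ is absorbed by using only two kinds of elementary trades, neither of which can ever produce a triple lying inside a single part. The first kind comes from applying Lemma~\ref{bkn}(ii) to a subset $Z\subseteq X$ that meets every part in at most two points and has $|Z|\equiv2\pmod4$: since $|Z\cap X_i|\le2$ for all $i$, the set ${Z\choose3}$ contains no triple inside a part, so every trade supplied by Lemma~\ref{bkn}(ii) already lies in ${X\choose3}\setminus\bigcup_i{X_i\choose3}$. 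The second kind is the product of the trivial $\T(0,1)$ trade $a^{+},b^{-}$ with a $\T(1,2)$ trade on a set $W\subseteq X\setminus X_i$, where $a,b\in X_i$; one checks directly that this is a $\T(2,3)$ trade of volume $2s_1$ (twice the volume $s_1$ of the $\T(1,2)$ factor), and because its apex lies in $X_i$ while its other two points lie outside $X_i$, none of its triples sits inside a part. For $|W|\equiv1\pmod4$ the attainable $s_1$ are governed by Lemma~\ref{bkn}(i).

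I would then argue by induction on $k$, treating the base case $v=8$ directly. For the step from $X'=X_1\cup\cdots\cup X_{k-1}$ to $X=X'\cup X_k$, write each target volume as $s=s'+s''$, where $s'$ is the volume of a trade on ${X'\choose3}\setminus\bigcup_{i<k}{X_i\choose3}$ (available by induction, i.e.\ $s'\in I_{v'}$ with $v'=4(k-1)$) and $s''$ is the volume of a \emph{crossing} trade, supported on the triples meeting $X_k$. These two supports are automatically triple-disjoint, so their union is a trade of volume $s'+s''$. A short count shows the crossing triples number $2(t_v-t_{v'})$, so the maximal crossing volume is $M:=t_v-t_{v'}$; moreover every pair of $X$ lies in an \emph{even} number of crossing triples ($4k-2$ for a crossing pair, $4$ for a pair inside $X'$, and $4(k-1)$ for a pair inside $X_k$). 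The key claim of the step is that the crossing trades realize exactly the volumes $\{4,6,7,\ldots,M-6,M-4,M\}$, the same shape as $I_v$.

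Granting this claim, the two spectra combine cleanly. The bulk part $\{6,7,\ldots,M-6\}$ of the crossing spectrum is a genuine interval, so I would take it as the set $B$ of Lemma~\ref{seq} and put $A=\{0\}\cup I_{v'}$; the gaps of $A$ are at most $4$, so once $M$ is large enough (i.e.\ for $k$ past the base cases) the hypothesis of Lemma~\ref{seq} holds and $A+B$ covers the whole interval $[\,6,\,t_{v'}+(M-6)\,]=[\,6,\,t_v-6\,]$. The three remaining values come from the tops of the two spectra: $s=4$ as $0+4$, $s=t_v-4$ as $t_{v'}+(M-4)$, and $s=t_v$ as $t_{v'}+M$. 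This reproduces $I_v$ exactly and closes the induction; note that the skipped values $t_v-1,t_v-2,t_v-3,t_v-5$ never have to be constructed.

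The crossing-trade claim is the main obstacle. Its small and bulk volumes can be supplied by the two gadgets above with $X_k$ in the distinguished role --- product trades with apex $a,b\in X_k$ over $\T(1,2)$ factors on subsets of $X'$ (Lemma~\ref{bkn}(i)), and Lemma~\ref{bkn}(ii) applied to subsets that meet each part in at most two points and contain two points of $X_k$ --- combined once more through Lemma~\ref{seq}. The genuinely hard cases are the maximal volume $M$ and the near-maximal $M-4,M-6$: the volume-$M$ trade requires signing every crossing triple so that each pair is $0$-balanced, for which the even-degree count above is exactly the necessary parity condition and an explicit balanced signing must be produced (for instance via a suitable $1$-factorization of the bipartite incidences between $X_k$ and $X'$, or by tiling the crossing triples with volume-$4$ trades); the values $M-4$ and $M-6$ are then reached by deleting a volume-$4$ and a volume-$6$ sub-trade whose colour classes respect the signs of the maximal trade. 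Producing these explicit maximal and near-maximal crossing trades, together with the finitely many small values of $k$ for which $M$ is too small to invoke Lemma~\ref{seq}, is where I expect essentially all of the work to lie.
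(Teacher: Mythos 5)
Your outer skeleton coincides with the paper's: induction adding one $4$-set at a time, volumes added over triple-disjoint supports, consolidation via Lemma~\ref{seq}, and an explicit base case at $v=8$ (which the paper supplies in its appendix as a table of trades of every volume in $I_8$ avoiding the two forbidden ${X_i\choose3}$'s). But the heart of your argument --- the claim that the crossing triples meeting the new part $X_k$ carry trades of every volume in $\{4,6,7,\ldots,M-6,M-4,M\}$, above all the maximal trade of volume $M=t_v-t_{v'}$ whose support is \emph{all} crossing triples --- is exactly what you do not prove, and you say so yourself. The even-degree count you give is only the necessary parity condition; no balanced signing of the $2M$ crossing triples is produced, and without it none of $t_v$, $t_v-4$, $t_v-6$ is reached, so the induction does not close. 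The paper disposes of this in one stroke by a decomposition your proposal is missing: the crossing triples split exactly as $\bigcup_{i<k}\B_i\cup\bigcup_{i<j<k}X_i\cdot X_j\cdot X_k$, where $\B_i=X_k\cdot{X_i\choose2}\cup X_i\cdot{X_k\choose2}$. Each $\B_i$ is precisely the ambient set of the $v=8$ base case on the $8$-set $X_i\cup X_k$, hence by the appendix carries trades of every volume in $I_8$, in particular a volume-$24$ trade covering all of $\B_i$; and each tripartite block $X_i\cdot X_j\cdot X_k$ ($64$ triples) is covered in full by a trade of volume $32$. Summing these pieces with the inductive trade on $X'$ and invoking Lemma~\ref{seq} yields all of $I_v$, maximal values included, with no bespoke maximal crossing trade ever needed.

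Two further concrete defects in your gadgets. First, your odd-volume source --- Lemma~\ref{bkn}(ii) applied to a set $Z$ with $|Z\cap X_k|=2$ --- is not supported on crossing triples: ${Z\choose3}$ contains all the triples inside $Z\cap X'$, and Lemma~\ref{bkn}(ii) gives no control over which triples of ${Z\choose3}$ the trade actually uses, so such a trade can collide with the inductive trade on $X'$; the disjointness of supports on which your volume additivity rests then fails. (Your product gadget $\{a^+,b^-\}\cdot T'$ is sound, but it only yields even volumes.) Second, the base case $v=8$ is itself nontrivial --- it is where the paper spends its explicit effort --- and ``treating it directly'' without exhibiting the trades leaves both ends of your induction open: the base case and, through the $\B_i$'s just described, the inductive step as well.
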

\begin{proof}{We prove the lemma by induction on $v$. For $v=8$, the assertion holds as shown in the appendix. Let $v\ge8$, we prove the lemma for $v+4$. Let $X$ be a set with $v+4$ points and let $X_0,X_1,\ldots,X_k$ be a partition of $X$ into $4$-subsets. Let $X'=X\setminus X_0$. Then,
$${X\choose3}\setminus\bigcup_{i=0}^k{X_i\choose3}={X'\choose3}\cup \bigcup_{i=1}^k\B_i\cup \bigcup_{1\le i<j\le k}X_0\cdot X_i\cdot X_j,$$
where $\B_i=X_0\cdot{X_i\choose2}\cup X_i\cdot{X_0\choose2}$.
 For each pair $ij$,  $1\le i<j\le k$, all the triples of  $X_0\cdot X_i\cdot X_j$ are covered by a trade of volume 32.
From the union of these trades,  a $\T(2,3,v)$ trade $T_1$ of volume $32{k\choose2}=v(v-4)$ is obtained. Since the assertion is true for $v=8$, the triples of each $\B_i$ are covered by a trade of volume 24.
The union of these trades gives a trade $T_2$ of volume $24k=6v$. By the induction hypothesis, for every $s\in I_v$, there exists a $\T(2,3,v)$ trade on $X'$ of volume $s$.
Now, by Lemma~\ref{seq}, for every $s\in I_{v+4}$ we can construct a trade of volume $s$ through the union $T\cup T_1\cup T_2$, for some $\T(2,3,v)$ trade $T$ on $X$.
}\end{proof}

\subsection{Necessary conditions}

In this part we describe the necessary conditions for the existence of an $\st(v,s)$, namely the three conditions: $v\equiv1,3\pmod6$, $s\le s_v$, and $s\not\in\{s_v-5,s_v-3,s_v-2,s_v-1\}$.
Some remarks regarding these conditions are in order.
\begin{itemize}
  \item The necessary conditions for the existence of a $t$-$(v,k,\lambda)$ design are also necessary for the existence of a signed $t$-$(v,k,\lambda)$  design, see \cite{gju,w}. Therefore, in the case of
Steiner triple systems,  the necessary condition $v\equiv1,3\pmod6$ is also necessary for the existence of an $\st(v,s)$.

  \item If $(X,\B)$ is an $\st(v,s)$, then $|\B^+|=s+v(v-1)/6$. So $|\B|=2s+v(v-1)/6\le{v\choose3}$. This implies that $s\le s_v= v(v-1)(v-3)/12$.

  \item Let $(X,\B)$ be an $\st(v,s)$ and let $\overline\B={X\choose3}\setminus\B$.
     For any $x,y\in X$, we have
   $$m(x,\overline\B)={v-1\choose2}-\frac{v-1}{2}-2m(x,\B^-),~~m(xy,\overline\B)=(v-2)-2m(xy,\B^-)-1,$$
both of which are even numbers.  With the above properties, the proof of non-existence of  $\st(v,s)$ for $s\in\{s_v-5,s_v-3,s_v-2,s_v-1\}$ is similar to the proof of non-existence of a $\T(2,3,n)$ trade of volume $s\in \{t_n-5,t_n-3,t_n-2,t_n-1\}$, where $t_n= \frac{1}{2}{n\choose3}$ and $n\equiv2\pmod4$, for this see \cite{bkt}.
\end{itemize}

\section{ Construction of $\bmi{\st(v,s)}$ for $\bmi{v\equiv3,7\pmod{12}}$}

Let $n\equiv1,3\pmod{6}$, $n\ge7$, we prove the theorem for $v=2n+1$. This proves the theorem for $v\equiv3,7\pmod{12}$, $v>7$. We do this in the following two subsections.
Let $X=\{x_1,\ldots,x_n\}$ and $Y=\{y_1,\ldots,y_{n+1}\}$. Let $(X,\B)$ be an $\sts(n)$. Suppose that $F_1,\ldots,F_n$ is a 1-factorization of ${Y\choose2}$ and
  \begin{equation}\label{b'}
  \B':=\bigcup_{i=1}^nx_i\cdot F_i.
\end{equation}
Then $(X\cup Y,\B\cup\B')$ is an $\sts(v)$.

\subsection{$\bmi{0\le s\le5}$}
 We show that there exists an $\st(v,s)$ for $1\le s\le5$.

 Without loss of generality assume that  $y_iy_7\in F_i$, for $i=1,\ldots,6$.
Suppose that $y_1y_4\in F_{j_1}$, $y_2y_5\in F_{j_2}$, and $y_3y_6\in F_{j_3}$ for some $j_1,j_2,j_3$. One can find a permutation $\sigma$ on $X$ with the property that
none of the $x_1^\si x_4^\si x_{j_1}^\si$, $x_2^\si x_5^\si x_{j_2}^\si$, $x_3^\si x_6^\si x_{j_3}^\si$ are triples of $\B$.
Hence we can rearrange $x_i$ so that they satisfy the above property. 
We define $$P(x,y,z;\,a,b,c)=+\{xyz,xbc,yac,zab\}\cup -\{abz, acy,bcx,abc\},$$
and set $P_k=P(y_k,y_{k+3},y_7;\,x_{k+3},x_k,x_{j_k}),$ for $k=1,2,3$. Now for $s=1,2,3$, let $\B_s=\B\cup\B'\cup\bigcup_{k=1}^sP_k$. Then the triple system $(X\cup Y,\,\B_s)$ gives an $\st(v,s)$.
 To see this, for example for $s=1$, we have $$P_1=\{y_1y_4y_7,y_1x_1x_{j_1},y_4x_3x_4,y_7x_1x_4\}\cup-\{x_1x_4x_{j_1},x_1y_1y_7,x_{j_1}y_1y_4,x_4y_4y_7\}.$$
 Then $(\B\cup\B')\cap P_1^+=\emptyset$ and all the triples of $P_1^-$ belong to $\B\cup\B'$ except for $-x_1x_4x_{j_1}$. So $\B\cup\B'\cup P_1$ is an $\st(v,1)$.

 Further if $\B_4=\B\cup\B'\cup P(y_1,y_2,y_3;\,y_4,y_5,y_6)$ and $\B_5=\B_1\cup P(y_1,y_2,y_3;\,y_4,y_5,y_6)$, then $(X\cup Y,\,\B_4)$ and $(X\cup Y,\,\B_5)$ are  $\st(v,4)$ and $\st(v,5)$, respectively.

\subsection{$\bmi{s\ge6}$}
For any $n$-set $X$ with $n\equiv1,3\pmod6$ and $n>7$, it is well known that ${X\choose3}$ possesses a large set of Steiner triple systems (see \cite{cr}), i.e. there is a partition $\C_1\cup\cdots\cup\C_{n-2}$ of ${X\choose3}$ such that  all $(X,\C_i)$, $i=1,\ldots,n-2$, are $\sts(n)$. Thus
\begin{equation}\label{large}
T_1=\bigcup_{i=2}^{n-2}(-1)^i\C_i
\end{equation}
 is a trade of volume $s_1=n(n-1)(n-3)/12$.
Let $\B=\C_1\cup\B'$ with $\B'$ as in (\ref{b'}). Let $T$ be a $\T(2,3,v)$ trade of volume $s$ on $X\cup Y$. Then  $(X\cup Y,\,\B\cup T)$ is an $\st(v,s)$.
 So in order to prove the theorem we show that for every $s\in\{6,\ldots,s_v-6,s_v-4,s_v\}$ there exist a trade $T$ of volume $s$ with triples in ${X\cup Y\choose3}\setminus\B$.

 We consider two cases: $n=4k-1$ and $n=4k+1$.

\noi{\em Case 1. $n=4k-1$}

 We have $${X\cup Y\choose3}={X\choose3}\cup \left(X\cdot{Y\choose2}\right)\cup\left(Y\cdot{X\choose2}\right)\cup{Y\choose3}.$$
Let $Y_1,\ldots,Y_k$ be a partition of $Y$ into $4$-subsets and let ${Y_i\choose3}=\{\al_{i,1},\al_{i,2},\al_{i,3},\al_{i,4}\}$. We cover all the triples of ${Y\choose3}$ as follows:
\begin{itemize}
  \item[-]  $T_2$,  any $\T(2,3,n+1)$ trade on $Y$ of volume $s_2\in \{6,7,\ldots,t_{n+1}-6,t_{n+1}-4,t_{n+1}\}$, where $t_{n+1}=(n+1)(n+2)(n-3)/12$ which exists by Lemma~\ref{remain};
  \item[-] $\B_1=\bigcup_{i=1}^k\{\al_{i,1},\al_{i,2},-\al_{i,3},-\al_{i,4}\}$ which is of volume $(n+1)/2$.
\end{itemize}
Then every 2-subset of $Y$ is 0-balanced in $\B_1$ except for the elements of a 1-factor of ${Y\choose2}$ such that half of its elements are $2$-balanced and the elements of the other half are $-2$-balanced. Suppose this 1-factor consists of $E_2$ and $E_3$ with
$$E_2=\{y_1y_3,y_5y_7,\ldots,y_{n-2}y_n\}~~\hbox{and}~~E_3=\{y_2y_4,y_6y_8,\ldots,y_{n-1}y_{n+1}\},$$
and the elements of $E_2$ and $E_3$ are $-2$- and $2$-balanced, respectively.

Now we deal with $X\cdot{Y\choose2}$. Let $L$ be a Latin square of order $n$ with entries in $X$ such that its first row is $(x_1,\ldots,x_n)$.
Consider the signed set
\begin{equation}\label{latin}
\bigcup_{i=2}^n\bigcup_{j=1}^n(-1)^iL(i,j)\cdot F_j.
\end{equation}
Note that $E_2$ and $E_3$ could be extended to two disjoint 1-factors of ${Y\choose2}$, say $F_2$ and $F_3$. So we may assume that $F_2=E_2\cup E_2'$, $F_3=E_3\cup E_3'$ for some $E_2',E_3'$.
(We remark that this is possible as any $(n-3)$-regular graph of order $n$ with $n$ even possesses a 1-factorization.)
We may assume that in the above union $x_1\cdot F_2$ and $x_1\cdot F_3$ appear with negative and positive signs, respectively.
Now change the signs of triples of these two sets to
$$x_1\cdot(E_2 \cup - E_2'\cup - E_3 \cup  E_3'),$$
and call the resulting singed set $\B_2$ which is of volume $\frac{n-1}{2}{n+1\choose2}$. Then every 2-subset of $X\cup Y$ is 0-balanced except for $\al\in E_2$ and $\be\in E_3$ with $m(\al,\B_2)=2$ and  $m(\be,\B_2)=-2$; and $m(x_1y_i,\B_2)=(-1)^{i+1}2$ for $i=1,\ldots,n+1$.

 Let $\{x_1x_2,x_1x_3,x_2x_3\}$ be the remaining blocks not covered by a $\T(1,2,n)$ trade of the maximum volume $(n+2)(n-3)/4$ on $X$.  We cover $Y\cdot{X\choose2}$ by
\begin{itemize}
  \item[-] the trade $T_3=\bigcup_{i=1}^{2k}\{-y_{2i-1},y_{2i}\}\cdot T_3'$, where $T_3'$ is a $\T(1,2,n)$ trade on $X$ of volume either $(n^2-1)/8$ or
    $(n+2)(n-3)/4$. So $T_3$ is of volume $s_3\in\{(n+1)(n^2-1)/8,(n+1)(n+2)(n-3)/4\}$.
  \item[-] $\B_3=\{x_1x_2,x_1x_3,-x_2x_3\}\cdot\bigcup_{i=1}^{2k}\{-y_{2i-1},y_{2i}\}$ which is of volume $3(n+1)/2$.
\end{itemize}
Then every 2-subset of $X\cup Y$ is 0-balanced in $\B_3$ except for $m(x_1y_i,\B_3)=(-1)^i2$ for $i=1,\ldots,n+1$.
Hence $T_4=\B_1\cup\B_2\cup\B_3$ is a trade of volume $s_4=2(n+1)+\frac{n-1}{2}{n+1\choose2}$.
Therefore, from Lemma~\ref{remain} it follows that by taking appropriate unions of trades $T_1,T_2,T_3,T_4$ we are able to construct a $\T(2,3,v)$ trade on $X\cup Y$ with volume $s$ for every
$s\in\{6,\ldots,s_v-6,s_v-4,s_v\}$. This completes the proof in this case.

\noi{\em Note:} For $v=15$ and correspondingly $n=7$, the trade $T_1$ as in (\ref{large}) does not exist since the associated large set does not by a result due to Cayley (see \cite{cr}). So if we let $T_1=\emptyset$
in all the above arguments, we obtain $\st(15,s)$ for $s\in\{6,\ldots,\ell-6,\ell-4,\ell\}$ where $\ell=s_{15}-14$. Now, let $\C_0=\F_1\cup\F_2\cup-\F_3$ where
$\F_1$ and $\F_2$ are the block sets of two disjoint Fano planes and $\F_3={X\choose3}\setminus(\F_1\cup\F_2)$ is the block set of a 2-$(7,3,3)$ design. Then $(X\cup Y, \C_0\cup\B')$
is an $\st(15,14)$. Now if we use $\C_0$ instead of $\C_1\cup T_1$ in  the above arguments, we obtain $\st(15,s)$ for the remaining values of $s$.


\noi{\em Case 2. $n=4k+1$}

Let $T_2$ be any $\T(2,3,n+1)$ trade of volume $s\in\{6,\ldots,\frac{1}{2}{n+1\choose3}\}$ on $Y$ which exists by Lemma~\ref{bkn}. Let $T_3$ be as in (\ref{latin}) and $T_4$ be the trade $T_4=\bigcup_{i=1}^{2k}\{-y_{2i-1},y_{2i}\}\cdot T_4'$, where $T_4'$ is a $\T(1,2,n)$ trade on $X$ of volume either $(n^2-1)/8$ or
      $\frac{1}{2}{n\choose2}$. So $T_4$ is of volume $s_3\in\{(n+1)(n^2-1)/8,n(n+1)(n-1)/4\}$.
Then again we see that by taking appropriate unions of trades $T_1,T_2,T_3,T_4$, we are able to construct a $\T(2,3,v)$ trade on $X\cup Y$ with volume $s$ for every
$s\in\{6,\ldots,s_v-6,s_v-4,s_v\}$.


\section{Construction of $\bmi{\st(v,s)}$ for $\bmi{v\equiv1,9\pmod{12}}$}
Let $n\equiv1,3\pmod{6}$, we prove the theorem for $v=2n+7$. This proves the theorem for $v\equiv1,9\pmod{12}$.

\subsection{$\bmi{0\le s\le 6n}$}

Let $Y=\{y_1,\ldots,y_{n+7}\}$ and $(X,\B_1)$ be an $\sts(n)$.
Let $\B_2=\{y_iy_{i+1}y_{i+3}\mid i\in\mathbb{Z}_{n+7}\}$.
 If we consider the triples of $\B_2$ as triangles in the complete graph with vertex set $Y$, and remove the edges of these triangles,  the remaining graph  is $n$-regular. This graph has a 1-factorization $E_1,\ldots,E_n$. Then
with $\B=\B_1\cup\B_2\cup\bigcup_{i=1}^nx_i\cdot E_i$ the triple system $(X\cup Y,\,\B )$ is an $\sts(v)$ (see \cite{cr}).
We notice that for $k=1,2,3$, $y_ky_{k+3}y_7$ does not belong to $\B_2$. Hence the same construction as the one applied in 3.1 gives rise to an $\st(v,s)$ for $s=1,2,3$.
Now for $4\le s\le6n$ and $n\ge7$ consider the following construction. (The cases $n=1,3$ will be treated at the end of this section). Let $T_2'$ be any $\T(1,2,n)$ trade on $X$ with volume $s\in\{2,3,\ldots,n\}$. Suppose $T_2$ is one of the trades $\{y_8,-y_9\}\cdot T_2'$ or $\bigcup_{i=8}^{10}\{-y_i,y_{i+3}\}\cdot T_2'$. Then $T_2$ is a $\T(2,3,v)$ trade with triples in $Y\cdot{X\choose2}$  of volume $s$ for every $s\in\{4,6,8,\ldots,6n\}$. Then $\B\cup T_2$ gives an $\st(v,s)$ for  $s\in\{4,6,8,\ldots,6n\}$.

\subsection{$\bmi{6n\le s\le s_v}$}

\noi{\em Case 1. $n=4k+1$}

For $n=4k+1$, we have $|Y|=4(k+2)$.
Let $Y_1,\ldots,Y_{k+2}$ be a partition of $Y$ into $4$-subsets. Then $\bigcup_{i=1}^{k+2}{Y_i\choose2}$ can be thought of as a union of three 1-factors of ${Y\choose2}$, say $F_{n+4}\cup F_{n+5}\cup F_{n+6}$. Removing these 1-factors from ${Y\choose2}$, the remaining graph is regular on an even number of vertices and with the even valency $n+3$. Thus it has a 1-factorization $F_1,\ldots,F_{n+3}$.
Let ${Y_i\choose3}=\{\al_{i,1},\al_{i,2},\al_{i,3},\al_{i,4}\}$. We cover the triples of ${Y\choose3}$ through the following:
\begin{itemize}
  \item[-]  $T_1$,  any $\T(2,3,n+7)$ trade on  $Y$ of volume $s_2\in \{6,7,\ldots,t_{n+7}-6,t_{n+7}-4,t_{n+7}\}$;
  \item[-] $\B_1=\bigcup_{i=1}^{k+2}\{\al_{i,1},\al_{i,2},\al_{i,3},\al_{i,4}\}$.
\end{itemize}

We cover $X\cdot{Y\choose2}$ by $\B_2\cup T_3$, where
\begin{itemize}
 \item[-] $\B_2=\bigcup_{i=1}^nx_i\cdot F_i  \cup x_1\cdot T_3';$
  \item[-] $T_2=\bigcup_{i=2}^n\bigcup_{j=1}^n(-1)^iL(i,j)\cdot F_j\cup\{x_2,\ldots,x_{2k+1},-x_{2k+2},\ldots,-x_n\}\cdot T_2';$
 \end{itemize}
where $$T_2'=F_{n+1}\cup F_{n+2}\cup F_{n+3}\cup-F_{n+4}\cup-F_{n+5}\cup-F_{n+6}.$$

In $Y\cdot{X\choose2}$ consider $T_3=\bigcup_{i=1}^{2k+4}\{-y_{2i-1},y_{2i}\}\cdot T_3'$, where $T_3'$ is a $\T(1,2,n)$ trade on $X$ of volume either $(n^2-1)/8$ or
    $\frac{1}{2}{n\choose2}$. So $T_4$ is a $\T(2,3,v)$ trade on $X\cup Y$ of volume $s_4\in\{(n+7)(n^2-1)/8,\,n(n+7)(n-1)/4\}$.

In $\B_1$, every 2-subset of $Y$ is 0-balanced  except for the elements of $F_{n+4}\cup F_{n+5}\cup F_{n+6}$
which are $2$-balanced. In $\B_2$ every 2-subset of $Y$ is 1-balanced except for the elements of $F_{n+4}\cup F_{n+5}\cup F_{n+6}$ which are $-1$-balanced. Also every element of $X\cdot Y$
is 1-balanced in $\B_2$. Thus in $\B_1\cup\B_2$, every 2-subset of $Y$ and every element of $X\cdot Y$ are 1-balanced.
Let $\C_i$ be as in 3.2 and $T_5$ as in (\ref{large}).
Therefore, with $\B=\C_1\cup\B_1\cup\B_2$, $(X\cup Y,\B)$ is an $\st(v,s)$ with $s=3(n+7)/2$. If we let $T$ be an appropriate union of trades $T_1,\ldots,T_5$, then by $(X\cup Y,\B\cup T)$, an $\st(v,s)$
can be obtained for $s\in\{3(n+7)/2+6,\ldots, s_v-6,s_v-4,s_v\}$.


\noi{\em Case 2. $n=4k-1$}

Assume that $U=X\cup\{y_{n+7}\}$ and $Z=Y\setminus\{y_{n+7}\}$. Then $|U|=n+1=4k$ and $|Z|=n+6=4(k+1)+1$.
Let $U_1,\ldots,U_k$ be a partition of $U$ into $4$-subsets and ${U_i\choose3}=\{\al_{i,1},\al_{i,2},\al_{i,3},\al_{i,4}\}$.
In a similar manner as in Case~1, suppose that $F_1,\ldots,F_n$ is a 1-factorization of ${U\choose2}$ where $F_{n-2}\cup F_{n-1}\cup F_n$ is the edge set of  $\bigcup_{i=1}^k{U_i\choose2}$.
 We cover the triples of ${U\choose3}$ through the following:
\begin{itemize}
  \item[-]  $T_2$,  any $\T(2,3,n+1)$ trade on  $U$ of volume $s_2\in \{6,7,\ldots,t_{n+1}-6,t_{n+1}-4,t_{n+1}\}$;
  \item[-] $\B_1=-\bigcup_{i=1}^k\{\al_{i,1},\al_{i,2},\al_{i,3},\al_{i,4}\}$.
\end{itemize}
Let $L$ be a Latin square of order $n$ on $\{y_7,\ldots,y_{n+7}\}$ such that its first row is $(y_7,\ldots,y_{n+7})$. We cover $Z\cdot{U\choose2}$ by:
\begin{itemize}
 \item[-] $\B_2=\bigcup_{i=1}^ny_{i+6}\cdot F_i;$
  \item[-] $T_3=\bigcup_{i=2}^n\bigcup_{j=1}^n(-1)^iL(i,j)\cdot F_j$;
  \item[-] $\B_3=\{y_1,y_2\}\cdot F_1\cup\{y_3,y_4\}\cdot F_2\cup\{y_5,y_6\}\cdot F_3$;
   \item[-] $T_4=\{y_1,y_2\}\cdot T'_1\cup\{y_3,y_4\}\cdot T'_2\cup\{y_5,y_6\}\cdot T'_3$;
 \end{itemize}
  where $T_i$ is the trade obtained from $\{F_1,\ldots,F_n\}\setminus F_i$, $i=1,2,3$, by negating half of the $F_i$.
   In $U\cdot{Z\choose2}$, let  $T_5$ be the trade
   $$T_5=\left(\{-x_n,y_{n+7}\}\cup\bigcup_{i=1}^{2k-1}\{-x_{2i-1},x_{2i}\}\right)\cdot T_5',$$
    where $T_5'$ is a $\T(1,2,n+6)$ trade on $Z$ of volume either $(n^2-1)/8$ or
      $\frac{1}{2}{n+6\choose2}$. So $T_5$ is of volume $s_5\in\{(n+1)(n^2-1)/8,(n+1)(n+6)(n+5)/4\}$.
Let $\C_1\cup\cdots\cup\C_{n+4}$ be a large set of ${Z\choose3}$ and
$T_6=\bigcup_{i=2}^{n+4}(-1)^i\C_i$. With $\B=\C_1\cup\B_1\cup\B_2\cup\B_3$, $(X\cup Y,\B)$ is an $\st(v,n)$.
By taking appropriate unions of trades $T_1,\ldots,T_6$, we are able to construct a $\T(2,3,v)$ trade on $X\cup Y$ with volume $s$ for every
$s\in\{n,\ldots,s_v-6,s_v-4,s_v\}$.

\noi{\em Note:} For $n=3$ and $v=13$, $\st(v,s)$ for $s=1,2,3$ are constructed in 4.1 and for $s\ge4$ in Case~2 of 4.2.
For $n=1$ and $v=9$, the only remaining volumes are $4\le s\le12$. But this can be easily done by taking appropriate union of $\st(9,s)$ for $s=1,2,3$ obtained in 4.2 and two disjoint trades of volume 4.


\section{ $\bmi{\st(v,s)}$ for $\bmi{v=7}$}

In this section we  treat the exceptional case of $v=7$.

\begin{pro} An $\st(7,s)$ exists if and only if $s\in\{0,2,3,5,6,8,14\}$.
\end{pro}
\begin{proof}{Let $X=\{1,\ldots,7\}$. For $s\in\{2,3,5,6,8\}$, $(X,\B_s)$ is an $\st(7,s)$, where
{\small\begin{align*}
 \B_2&=\{123, 145, 147, 156, 167, 246, 257, 346, 357, -157, -146\},\\
  \B_3&=\{123, 147, 156, 235, 246, 257, 267, 347, 367, 456, -467,-256,-237\},\\
   \B_5&=\{ 123, 136, 145, 147, 167, 235, 246, 267, 346, 347, 357, 456, -467, -345,-236,-146,-137\},\\
    \B_6&=\{123, 136, 145, 147, 156, 234, 235, 246, 257, 267, 357, 367, 456,-567,-245,-237,-236,-146,-135\},\\
     \B_8&=\{123, 145, 147, 156, 167, 234, 235, 246, 257, 267, 346, 357, 367, 456, 457,\\ &~~~~~ -567,-467,-345,-245,-237,-236,-157,-146\}.
\end{align*}}
An $\st(7,14)$ can be constructed as follows. Let $\F_1$ and $\F_2$ be the block sets of two disjoint Fano planes. Then $(X,\F_3)$  with $\F_3={X\choose3}\setminus(\F_1\cup\F_2)$ is a 2-$(7,3,3)$ design. It follows that $\F_3\cup-(\F_1\cup\F_2)$ is the block set of an $\st(7,14)$.

It remains to show that there does not exist $\sts(7,s)$ for  $s\in\{1,4,7,10\}$.

 First we show that there is no $\st(7,1)$. Let $\B^-=\{123\}$. Then two triples in $\B^+$ contain 12, say 124 and 125. Also
 two triples contain 13 which besides 1 have no points in common
with the triples containing 12, so they must be 136 and 137. Now there is
no way to choose the triples containing 23.

We proceed by proving the non-existence of $\st(7,4)$.  Let $(X,\B)$ be an $\st(7,4)$.
For all $x\in X$, we have $0\le m(x,\B^-)\le4$.
 Suppose that $m(1,\B^-)=4$. Then the number of pairs $1x$ occurring in $\B^-$ is 8, so the number of pairs  $1x$ occurring in $\B^+$ is at least 16, which implies $m(1,\B^+)\ge8$ but this is impossible since $m(1,\B^+)=3+m(1,\B^-)$. So $m(x,\B^-)\le3$ for all $x$. Suppose $m(1,\B^-)=3$. We claim that $m(1x,\B^-)\le1$ for  every $x\in X$. Let $m(12,\B^-)=2$ and $123,124\in\B^-$. Then out of six triples of $\B^+$ containing 1, four triples contain 12, and both 13 and 14 must occur twice in the two remaining  triples which is impossible. Thus, for all $x\in X$, $m(1x,\B^-)=1$, i.e. $123,145,167\in\B^-$. Now each of the pairs $23,45,67$ must appear twice in the triples of $\B^+$ not containing 1,
  but we have only five triples, hence we are done.  Therefore, $0\le m(x,\B^-)\le2$. We claim that $m(\al,\B^-)\le1$ for all $\al\in{X\choose2}$. Assume that $m(12,\B^-)=2$. Let $123,124\in\B^-$. As $m(1,\B^+)=m(1,\B^-)+3=5$, we have five triples in $\B^+$ which must contain four copies of 12, and two copies of 13 and 14; but this is also impossible.
As $m(\al,\B^-)\le1$ for all $\al\in{X\choose2}$, it is easily seen that either (a) for six points $m(x,\B^-)=2$ and for one point $m(x,\B^-)=0$, or (b) for five points $m(x,\B^-)=2$ and for two points $m(x,\B^-)=1$.
If the case (a) occurs, then $\B^-$ must be a `Pasch configuration', say $\{123,145,246,356\}$. So the triples containing 1 in $\B^+$ are either (aa) $124,125,134,135,167$, or
(ab) $124,125,135,137,146$. Now we treat the triples containing 4. If (aa) is the case, then the rest of triples containing 4 must be $456,457,246$ which is impossible because $246\in\B^-$.
If (ab) is the case, then the rest of  triples containing 4 are either (aba) $456,457,234$, or (abb) $456,345,247$. The case (aba) is impossible because it forces 5 to be in the same triple as 3 and 6.
The case (abb) forces $567\in\B^+$ and so we have  two triples for 2 which contain each of 23 and 26 twice, a contradiction.
Now suppose (b) occurs. It is easily seen that $\B^-$ must be isomorphic to $\{123,145,246,357\}$.
So the triples containing 1 in $\B^+$ are either (ba) $124,125,134,135,167$, or (bb) $125,127,135,134,146$.
The case (ba) forces $457,456,246\in\B^+$, and this implies that $357\in\B^+$ which is impossible since $357\in\B^-$. The case (bb) again is impossible because 4 must appear in three more triples in which each of 24 and 45 must occur twice but 25 must not occur.

 Let $(X,\B)$ be an $\st(7,7)$. We denote ${X\choose3}\setminus\B$ by $\overline\B$. First, suppose that  $m(\al,{\overline\B})\le2$ for all $\al\in{X\choose2}$. Since $\sum_{\al\in{X\choose2}}m(\al,{\overline\B})=3\cdot|{\overline\B}|=42$, it follows that $m(\al,{\overline\B})=2$ for all $\al\in{X\choose2}$. It turns out that $(X,{\overline\B})$ is a 2-$(7,3,2)$ design and consequently $\B^+$ and $\B^-$ are 2-$(7,3,2)$ and 2-$(7,3,1)$ designs, respectively.
 But it is well known that such a partition of ${X\choose3}$ does not exist (\cite{cr}). Therefore, for some $\al$, $m(\al,{\overline\B})=4$.
 Note that if $m(\al,{\overline\B})=0$, 2, or 4, then $m(\al,\B^+)=3,$ 2, 1, respectively. Let $\y_1$ and $\y_2$ be the characteristic vectors of $\overline\B$ and $\B^+$, respectively, and  $\x=\y_1-\y_2$. Let $\x=\y_1-\y_2$.
The entries of $W_{2,3}^7\x$ are $-3$, 0, or 3, and thus $\x\in{\rm null}_{\mathbb{Z}_3}(W_{2,3}^7)$.
If $\x_1$ is the characteristic vector of the block set of a 2-$(7,3,3)$  design, then  $W_{2,3}^7\x_1=(3,\ldots,3)^\top$, and so $\x_1\in{\rm null}_{\mathbb{Z}_3}(W_{2,3}^7)$.
It is known that the rank of $W_{2,3}^7$ over $\mathbb{Z}_3$ is one less than its rank over $\mathbb{R}$ (\cite{w2}, see also \cite{gkmm}).
It follows that for any vector $\x\in{\rm null}_{\mathbb{Z}_3}(W_{2,3}^7)$, one has $\x=c\x_1+\x_0$ for some $c\in\mathbb{R}$ and $\x_0\in{\rm null}_{\mathbb{R}}(W_{2,3}^7)$. Then, considering the inner product with the all-1 vector, we have  $0=21c+0$. This means $c=0$ which is impossible.

 Finally, we show that there is no $\st(7,10)$. We claim that if $m(\al,{\overline\B})>0$, then $m(\al,{\overline\B})=2$.
 Let $m(12,{\overline\B})=4$ and $123,124,125,126\in\overline\B$. The pairs $1x$ and $2x$, for $x=3,4,5,6$, must appear one more time in the triples of $\overline\B$ which implies that $134,156,234,256\in\overline\B$. By this, all the eight triples of $\B$ have been determined while $m(x,\B)=3$, for $x=3,4,5,6$, which is  impossible, and therefore proving the claim.
Clearly, if $m(x,\overline\B)>0$, then $m(x,{\overline\B})\ge4$.  Suppose that $m(1,\overline\B)=6$. Then $m(1x,\overline\B)=2$ for all $1\ne x\in X$. It follows that the two triples of $\B$ not containing 1, must be 234 and 567, and thus
 $m(x,\overline\B)=3$ for $1\ne x$ which is also impossible. Thus  if $m(x,{\overline\B})>0$, then $m(x,{\overline\B})=4$.
 Therefore, $\overline\B$ has the property that for exactly six points $x\in X$, $m(x,\B)=4$ and further if $m(\al,{\overline\B})>0$, then $m(\al,{\overline\B})=2$. It is easily seen that $\overline\B$ is the union of a Pasch configuration and its counterpart, say $P\cup P'$. If $m(\al,{\overline\B})=0$, then $m(\al,\B^-)=2$ and $m(\al,\B^+)=3$; and if $m(\al,{\overline\B})=2$, then $m(\al,P)=m(\al,P')=m(\al,\B^-)=1$ and $m(\al,\B^+)=2$. Hence  $\B^-\cup P$ and $\B^+\cup P'$ are 2-$(7,3,2)$ and 2-$(7,3,3)$  designs, respectively. It is known that the block set of any 2-$(7,3,2)$ designs is a union of two disjoint Fano planes, say $F\cup F'$. On the other hand, any Pasch configuration can be extended to a Fano plane in a unique way by adding a set of three triples. Let $Q$ be such a set for $P$. We claim that $Q\subset F\cup F'$. To obtain a contradiction, assume that $Q\cap \B^+\ne\emptyset$. (Note that $Q\cap(P\cup P')=\emptyset$.) If $|Q\cap\B^+|=3$, then $P'\cup Q$ is a Fano plane contained in $B^+\cup P'$, which is impossible because it is well known that any 2-$(7,3,3)$ design does not contain a Fano plane. If $|Q\cap\B^+|=2$, then $P\cup Q$ has five points in common with $F\cup F'$, but this is impossible since any two distinct Fano planes have 0, 1, or 3 blocks in common. If $|Q\cap\B^+|=1$, then
 $B^+\cup P\cup P'\cup Q$ is a 2-$(7,3,4)$ design with 27 distinct blocks in which one block is  repeated; but such a design does not exist (\cite{hl}).

Now the proof is  complete.
}\end{proof}


\begin{table}
\textbf{\large Appendix.}~$\T(2,3,8)$ trades as described in Lemma~\ref{remain}. \\
{\small The rows are indexed by triples and the columns are indexed by the volume of trades.}

$${\footnotesize
  \begin{array}{ccccccccccccccccc}
  \hline
    &4 & 6&7 &8 &9 &10 &11 &12 & 13& 14& 15& 16&17 &18 &20 &24 \\
    \hline
    123& & & & & & & & & & & & & & & & \\ 124& & & & & & & & & & & & & & & & \\ 134& & & & & & & & & & & & & & & & \\ 234& & & & & & & & & & & & & & & & \\ 125& & & & +& & +& & +& +& & +& +& +& +& +& +\\ 126& & & & -& & -& & -& -& & -& -& -& -& -& -\\ 127& & & & & & & -& -& -& -& & -& -& -& -& -\\ 128& & & & & & & +& +& +& +& & +& +& +& +& +\\ 135& & & & -& & & & -& -& & -& -& -& -& -& -\\ 136& & & & +& -& +& & +& & & +& +& +& +& +& +\\ 137& & & & & +& & & -& & -& -& & -& -& -& -\\ 138& & & & & & -& & +& +& +& +& & +& +& +& +\\ 145& & & & & & & & & & & & +& +& +& +& +\\ 146& & & & & +& & & & +& & & & & & +& +\\ 147& & & & & -& & +& +& & +& +& & & & -& -\\ 148& & & & & & & -& -& -& -& -& -& -& -& -& -\\ 156& & +& +& & +& & +& +& +& +& +& & & & -& -\\ 157& & -& -& & -& -& -& & & & -& & & & +& +\\ 158& & & & & & & & -& -& -& & -& -& -& -& -\\ 167& & & & & & & & & & & & & +& +& +& +\\ 168& & -& -& & -& & -& -& -& -& -& & -& -& -& -\\ 178& & +& +& & +& +& +& +& +& +& +& +& +& +& +& +\\ 235& & -& & & & & & & & +& +& +& & +& +& +\\ 236& & & & & & & & & & -& -& -& & -& -& -\\ 237& & & & & & & +& +& +& +& & +& +& +& +& +\\ 238& & +& & & & & -& -& -& -& & -& -& -& -& -\\ 245& & & & & & & & & & -& -& -& -& -& -& -\\ 246& -& & -& -& -& -& -& & -& & & & & & -& -\\ 247& & & & & & & & & & & & & & +& +& +\\ 248& +& & +& +& +& +& +& & +& +& +& +& +& & +& +\\ 256& & & & & & & & & & & & & & & +& +\\ 257& & +& & & & & & & & & & & +& -& -& -\\ 258& & & & -& & -& & -& -& & -& -& -& & -& -\\ 267& +& & +& +& +& +& +& & +& +& +& +& & +& +& +\\ 268& & & & +& & +& & +& +& & +& +& +& +& +& +\\ 278& -& -& -& -& -& -& -& & -& -& -& -& -& -& -& -\\ 345& & +& & & & -& & & & & & -& & -& -& -\\ 346& +& & +& +& +& +& +& & +& +& +& +& & +& +& +\\ 347& & & & & & & -& & & & & & +& & +& +\\ 348& -& -& -& -& -& & & & -& -& -& & -& & -& -\\ 356& & & -& & & & & & & & & +& +& +& & +\\ 357& & & +& & & & & & & -& & -& -& & & -\\ 358& & & & +& & +& & +& +& & & +& +& & +& +\\ 367& -& & -& -& -& -& -& & -& & & -& -& -& -& -\\ 368& & & +& -& +& -& & -& & & -& -& -& -& & -\\ 378& +& & & +& & +& +& & & +& +& +& +& +& & +\\ 456& & -& & & -& & -& -& -& -& -& -& -& -& & -\\ 457& & & & & +& +& +& & & +& +& +& & +& & +\\ 458& & & & & & & & +& +& +& +& +& +& +& +& +\\ 467& & & & & & & & & & -& -& & & -& -& -\\ 468& & +& & & & & +& +& & +& +& & +& +& & +\\ 478& & & & & & -& -& -& & -& -& -& -& -& & -\\ 567& & & & & & & & & & & & & & & & \\ 568& & & & & & & & & & & & & & & & \\ 578& & & & & & & & & & & & & & & & \\ 678& & & & & & & & & & & & & & & &\\ \hline
  \end{array}}$$
  \end{table}
\end{document}